\documentclass[11pt,reqno]{amsart}
\usepackage{microtype}

\usepackage{mathtools}
\usepackage[dvipsnames]{xcolor}
\usepackage[colorlinks=true,linkcolor=Maroon,citecolor=OliveGreen]
{hyperref}
\usepackage{amssymb}
\usepackage{mathdots}
\usepackage[shortlabels]{enumitem}
\setlist[enumerate]{label={(\arabic*)}}

\usepackage{booktabs}

\usepackage[capitalize]{cleveref}
\crefname{equation}{}{}

\usepackage{thmtools}

\usepackage[abbrev]{amsrefs}  

\numberwithin{equation}{section}
\newtheorem{theorem}{Theorem}
\newtheorem{lemma}[theorem]{Lemma}

\newtheorem{proposition}[theorem]{Proposition}
\newtheorem{corollary}[theorem]{Corollary}

\theoremstyle{definition}

\newtheorem{question}[theorem]{Question}
\newtheorem*{acknowledgments}{Acknowledgments}

\def\R{\mathbf{R}} 
\def\Z{\mathbf{Z}} 
\def\F{\mathbf{F}} 

\def\G{\mathbf{G}}

\newcommand\opr[1]{\operatorname{#1}}
\def\Aut{\opr{Aut}}
\def\Sym{\opr{Sym}}

\def\GL{\opr{GL}}

\def\SL{\opr{SL}}
\def\SU{\opr{SU}}
\def\PSL{\opr{PSL}}
\def\PSU{\opr{PSU}}
\def\Sp{\opr{Sp}}
\def\PSp{\opr{PSp}}

\renewcommand\O{\opr{O}}

\def\Spin{\opr{Spin}}

\def\GU{\opr{GU}}
\def\Hom{\opr{Hom}}

\newcommand\BSW[1]{{\cite{BSW}*{#1}}}
\newcommand\MT[1]{{\cite{MT}*{#1}}}

\newcommand\fix{\opr{fix}}

\usepackage{todonotes}

\author{Jessica Anzanello}
\address{Dipartimento di Matematica e Applicazioni, University of Milano-Bicocca, Piazza del Calendario 3, 20125 Milano, Italy}
\email{j.anzanello@campus.unimib.it}

\author{Sean Eberhard}
\address{Mathematics Institute, Zeeman Building, University of Warwick, UK}
\email{sean.eberhard@warwick.ac.uk}

\thanks{%
This work was completed while J.A.\ was a visiting PhD student at the University of Warwick, and she gratefully acknowledges its hospitality.
S.E.\ is supported by the Royal Society through a University Research Fellowship (URF\textbackslash R1\textbackslash 221185).}

\begin{document}

\title
{Fixed-point-free elements in two-orbit permutation groups}
\begin{abstract}
    Let $G$ be a two-orbit permutation group on $n > 2$ points.
    We show that $G$ contains either a derangement or an element of prime-power order with a unique fixed point.
    As a corollary, if the orbits of $G$ have length $n_1$ and $n_2$ and $\gcd(n_1, n_2-1) = \gcd(n_1-1, n_2) = 1$, then $G$ contains a derangement.
    The special case $n_1 = n_2$ was recently conjectured by Ellis and Harper and proved under various restrictive hypotheses.
    We prove our result by reducing to the case of simple groups and leveraging the classification of normal $2$-coverings of simple groups due to Bubboloni, Spiga, and Weigel.
\end{abstract}

\maketitle

\section{Introduction}

If $G \le \Sym(\Omega)$ is a permutation group acting on a finite set $\Omega$, we write $\fix(g)$ for the number of fixed points on $\Omega$ of an element $g \in G$, and we say $g$ is a \emph{derangement} if $\fix(g) = 0$.

A classical theorem of Jordan states that every finite transitive group $G \le \Sym(\Omega)$ with $|\Omega| > 1$ contains at least one derangement. A deep theorem of Fein--Kantor--Schacher~\cite{FKS} adds that some derangement has prime-power order.

In this note we obtain the following generalization to two-orbit permutation groups.
The theorem of Fein--Kantor--Schacher coincides with the special case in which one of the orbits is trivial.

\begin{theorem}
    \label[theorem]{thm:main}
    Let $G \le \Sym(\Omega)$ be a two-orbit permutation group with $|\Omega| > 2$. Then $G$ contains either
    \begin{enumerate}
        \item a derangement, or
        \item an element of prime-power order with a unique fixed point.
    \end{enumerate}
\end{theorem}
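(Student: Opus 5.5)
We argue by contradiction, taking a minimal counterexample: a two-orbit group $G \le \Sym(\Omega)$ with orbits $\Omega_1, \Omega_2$ and $|\Omega|>2$, containing no derangement and no element of prime-power order with a unique fixed point. Let $H = G_\alpha$ and $K = G_\beta$ be point stabilisers for $\alpha \in \Omega_1$, $\beta \in \Omega_2$. Having no derangement means $G = \bigcup_g H^g \cup \bigcup_g K^g$, i.e.\ the conjugates of $H$ and $K$ form a normal $2$-covering of $G$ (when both $H, K$ are proper). If one orbit is trivial, say $\Omega_2 = \{\beta\}$, then $|\Omega_1| \ge 2$ and Fein--Kantor--Schacher give an element of prime-power order that is a derangement on $\Omega_1$; it fixes only $\beta$, so alternative (2) holds. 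Hence we may assume both orbits are nontrivial, and by Jordan neither $H$ nor $K$ alone covers $G$.

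\textbf{Reduction.} Let $G_1, G_2$ be the actions on the two orbits. An element of prime-power order that is a derangement on $\Omega_1$ and fixes exactly one point of $\Omega_2$ gives (2). So the hypothesis says, roughly, that every prime-power element which is a derangement on one orbit has either $0$ or at least $2$ fixed points on the other. I would reduce to the case where $G$ acts primitively on each orbit and has a simple socle, as follows.

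First, if $\Omega_1$ admits a nontrivial block system, we pass to the quotient by it. Derangements of the quotient are derangements on $\Omega_1$, though fixed-point counts on $\Omega_2$ are unchanged. We must check that a prime-power element with a unique fixed point in the quotient setting lifts. This is delicate, so instead I would reduce via a minimal normal subgroup $N$. Let $N$ be minimal normal in $G$ and replace the elements by cosets $gN$. Using FKS-type counting within cosets (the standard argument that a coset $gN$ of a transitive normal subgroup contains derangements, via Burnside's lemma averaged over the coset), we reduce to $G = N\langle g\rangle$ almost simple or with $N = T^k$. Product-type socles would be handled by choosing elements in a single coordinate factor.

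\textbf{Simple case.} We then arrive at $T \le G \le \Aut(T)$ with two maximal-ish stabilisers $H, K$ whose conjugates cover $G$ (or the relevant coset $Tg$). Here we invoke the Bubboloni--Spiga--Weigel classification of normal $2$-coverings of almost simple groups. For each pair $(H,K)$ on their list, we exhibit an element of prime-power order, typically a suitable Singer-type or regular unipotent element. This element must lie in a unique conjugate of $K$ and fix exactly one point of $G/K$ while fixing none of $G/H$, or else the list yields a genuine derangement. The examples are linear groups, such as $\PSL_n(q)$ acting on points and hyperplanes, and symplectic and orthogonal groups. There, for instance, a unipotent element with a single Jordan block fixes exactly one point, being prime-power order with a unique fixed point.

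\textbf{Main obstacle.} The hardest step is the reduction to the almost simple case, which must preserve the property ``prime-power order with a unique fixed point.'' Uniqueness of fixed points does not pass naturally to quotients, and the nonabelian minimal normal subgroup $T^k$ with $k>1$ requires care. I would first try to handle it by taking a prime-power element in $T$ acting on one coordinate and analysing its fixed points via the product action. I would then carefully verify uniqueness of fixed points case by case in the BSW list.
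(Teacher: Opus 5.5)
\textbf{There is a genuine gap: the reduction step is never carried out, and the route you sketch for it does not work.} Your overall plan matches the paper's: a minimal counterexample, Fein--Kantor--Schacher when an orbit is trivial, then normal $2$-coverings and the Bubboloni--Spiga--Weigel classification. But the reduction is exactly the step you yourself flag as the main obstacle.

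The ``standard argument'' you invoke is false. \Cref{lemma:burnside} only says that $\fix$ has average $1$ on a coset $Ng$ of a transitive $N$. That does not yield a derangement: the transpositions of $S_3$ form an $A_3$-coset in which every element has exactly one fixed point. Aiming at almost simple groups, coverings of cosets $Tg$ and socles $T^k$ would need input beyond \BSW{Theorem~11.1}, and ``choosing elements in a single coordinate factor'' is not an argument. None of this is needed.

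The missing idea is that the dichotomy in the statement makes a minimal counterexample outright \emph{simple}:
\begin{enumerate}
\item Let $1\ne N\lhd G$ be proper. Then $N$ inherits both hypotheses on $\Omega$, so by minimality it has at least $3$ orbits.
\item Hence $G$ acts with two orbits on the set $\Delta$ of $N$-orbits, with $|\Delta|>2$ and image $G^*$ of smaller order.
\item $G^*$ has no derangement, since each $g$ fixes a point and hence its $N$-orbit. So minimality gives a $p$-element $g^*$ fixing only some $\Sigma\in\Delta$; lift it to a $p$-element $g\in G$.
\item Every $h\in Ng$ has all its fixed points in $\Sigma$, on which $N$ is transitive. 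So $\fix$ averages to $1$ on $Ng$, while $\fix(h)\ge 1$ for every $h$.
\item Hence $\fix(h)=1$ throughout $Ng$; in particular $\fix(g)=1$, a contradiction.
\end{enumerate}
So the averaging lemma is used not to produce derangements but, in their absence, to force a unique fixed point. This is exactly how uniqueness passes to the quotient. Quotienting by a block system of one orbit, as you first tried, fails because the kernel need not be transitive on the fixed block.

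The simple-group part is also only a placeholder, and your illustrations are not on the list. Stabilisers of points and of hyperplanes in $\PSL_n(q)$, $n\ge 3$, do not form a normal $2$-covering, since a Singer cycle lies in neither. Moreover, a regular unipotent element fixes one point \emph{and} one hyperplane, so it has two fixed points on the union. What is needed is a case-by-case pass through \BSW{Tables~11.1--11.8}. The infinite families are $\SL_2(q)$, $\SL_3(q)$, $\SL_4(q)$, $\SU_3(3^f)$, $\SU_4(q)$, $\Sp_6(3^f)$, $\Sp_{2n}(2^f)$ with components $\O^\pm_{2n}(q)$, $G_2(2^f)$ and $F_4(3^f)$. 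For these the paper takes an element $s$, usually of primitive-prime-divisor order, with two properties. First, its order does not divide the order of one component. Second, it lies in exactly one conjugate of the other component, for one of these reasons:
\begin{itemize}
\item $C_G(s)$ is a torus whose normaliser lies in that component;
\item $s$ has a unique invariant subspace or decomposition of the relevant type;
\item for $\O^-_{2n}(q)\le\Sp_{2n}(q)$ with $q$ even, there is a unique $s$-invariant quadratic form with the given polar form.
\end{itemize}
The remaining small groups are settled by computer. Alternatively, when every element has prime-power order: $\fix$ averages to $2$ on $G$ and $\fix(1)>2$, so some element has exactly one fixed point.
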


Note that, in conclusion (1), we cannot require the derangement to have prime-power order. Indeed, $C_6$ has a two-orbit action of degree $5$, with point stabilizers $C_2$ and $C_3$, in which every derangement has order $6$.

Two-orbit permutation groups arise naturally in several contexts.
For instance, they occur in graph theory as automorphism groups of graphs that are edge-transitive but not vertex-transitive.
They also lead naturally to normal $2$-coverings of finite groups, whose motivations and applications are discussed in detail in the introduction of the book of Bubboloni--Spiga--Weigel \BSW{}.
For number-theoretic applications, see \cite{Muller}, \cite{EH}*{Remark 10}, and \cite{Saxl}.

Our original motivation for formulating \cref{thm:main} is a recent conjecture of Ellis and Harper~\cite{EH} (generalizing \cite{Saxl}*{Proposition~1}),
which predicts that every two-orbit permutation group with equal-sized nontrivial orbits contains a derangement.
Ellis and Harper proved their conjecture under various restrictive hypotheses, for example if $G$ is soluble, or almost simple, or primitive on at least one of the orbits.
Further results were obtained by Lee--Popiel--Verret~\cite{LPV}.

The Ellis--Harper conjecture follows easily from \Cref{thm:main},
and indeed the following corollary is more general.
From now on, we say that a two-orbit permutation group has \emph{orbit type} $(n_1, n_2)$ if its orbits have lengths $n_1$ and~$n_2$.

\begin{corollary}
    \label[corollary]{cor:main}
    Let $G \le \Sym(n)$ be a two-orbit permutation group of orbit type $(n_1, n_2)$, where $n = n_1 + n_2 > 2$.
    If \(\gcd(n_1, n_2-1) = \gcd(n_1-1, n_2) = 1\), then $G$ contains a derangement.
\end{corollary}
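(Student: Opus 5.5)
We derive the corollary from \cref{thm:main}. Suppose, for a contradiction, that $G$ has no derangement. Since $n > 2$, \cref{thm:main} gives an element $g \in G$ of prime-power order $p^a$, with $a \ge 1$ because $g$ has exactly one fixed point and so $g \ne 1$, such that $\fix(g) = 1$. Let $\Omega_1$ and $\Omega_2$ be the orbits, of lengths $n_1$ and $n_2$. The unique fixed point of $g$ lies in one of them; by symmetry of the hypothesis we may assume it lies in $\Omega_1$. Then $g$ acts on $\Omega_1$ with exactly one fixed point and acts fixed-point-freely on $\Omega_2$.

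We now count orbits of $\langle g \rangle$. Every orbit of $\langle g\rangle$ has length a power of $p$. On $\Omega_2$ there are no fixed points, so every cycle has length divisible by $p$, and therefore $p \mid n_2$. On $\Omega_1$ every cycle other than the single fixed point has length divisible by $p$, and therefore $p \mid n_1 - 1$. Hence $p$ divides $\gcd(n_1 - 1, n_2)$, contradicting the hypothesis that this gcd is $1$. In the other case, where the fixed point lies in $\Omega_2$, the same argument gives $p \mid \gcd(n_1, n_2 - 1)$, which is also excluded. So $G$ contains a derangement.

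There is no real obstacle here. The one point needing care is that the case with a trivial orbit ($n_1 = 1$ or $n_2 = 1$) is still covered. This holds because \cref{thm:main} only requires $n > 2$, and the counting argument does not use the orbit lengths beyond the divisibility statements. For instance, if $n_1 = 1$, then $\gcd(n_1 - 1, n_2) = \gcd(0, n_2) = n_2 > 1$, so the hypothesis already fails and nothing needs to be checked.
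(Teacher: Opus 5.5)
Your proposal is correct and follows essentially the same argument as the paper: under the no-derangement assumption, you apply \cref{thm:main} to get a $p$-element with a unique fixed point. The cycle-length divisibility then forces $p \mid \gcd(n_1-1, n_2)$ or $p \mid \gcd(n_1, n_2-1)$, contradicting the hypothesis.
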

\begin{proof}
    Let $\Omega_1$ and $\Omega_2$ be the orbits, where $|\Omega_i| = n_i$.
    Suppose $g \in G$ is a $p$-element with a unique fixed point $x$.
    If, say, $x \in \Omega_1$ then $p$ divides $n_1-1$ and $n_2$,
    and similarly if $x \in \Omega_2$.
    Therefore if $\gcd(n_1, n_2 - 1) = \gcd(n_1 - 1, n_2) = 1$ then $G$ cannot contain an element of prime-power order with a unique fixed point, so \Cref{thm:main} implies that $G$ contains a derangement.
\end{proof}

We record the following graph-theoretic consequence of \Cref{cor:main}, which was already observed as a consequence of the Ellis--Harper conjecture in \cite{EH}*{Remark~8}.

\begin{corollary}
    \label{cor:edge-transitive-graphs}
    Let $\Gamma$ be a finite edge-transitive regular graph of degree $n > 1$.
    Then $\Aut(\Gamma)$ contains a derangement in its action on the vertices of $\Gamma$.
\end{corollary}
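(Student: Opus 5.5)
The plan is to split according to whether $G = \Aut(\Gamma)$ is transitive on the vertex set $V$ of $\Gamma$, and in the intransitive case to reduce to \Cref{cor:main} with $n_1 = n_2$.

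\emph{Number of vertex orbits.} Since $\Gamma$ is regular of degree $n > 1$, every vertex lies on at least one edge. Fix an edge $\{u, v\}$. By edge-transitivity, every edge is $G$-conjugate to $\{u,v\}$, so each of its endpoints lies in $u^G \cup v^G$. Every vertex lies on an edge, so $V = u^G \cup v^G$, and $G$ has at most two orbits on $V$. Moreover $|V| \ge n + 1 \ge 3$.

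\emph{Transitive case.} If $G$ is transitive on $V$, then Jordan's theorem directly gives a derangement, since $|V| > 1$.

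\emph{Intransitive case.} Otherwise $u^G \ne v^G$, so $G$ has exactly two orbits $V_1 = u^G$ and $V_2 = v^G$.
\begin{enumerate}
    \item Every edge is an image of $\{u,v\}$, so it joins a vertex of $V_1$ to a vertex of $V_2$. Thus $\Gamma$ is bipartite with parts $V_1$ and $V_2$.
    \item Counting edges from each side gives $n|V_1| = |E(\Gamma)| = n|V_2|$. Hence $|V_1| = |V_2| = m$, and $m \ge n \ge 2$.
    \item So $G$ is a two-orbit permutation group of orbit type $(m,m)$ on $2m > 2$ points.
    \item The hypothesis of \Cref{cor:main} becomes $\gcd(m, m-1) = 1$, which always holds.
\end{enumerate}
Therefore \Cref{cor:main} yields a derangement of $G$ on $V$.

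There is no real obstacle here. The only point needing care is that edge-transitivity leaves at most two vertex orbits and that, when there are two, both have the same size. This uses only that every vertex has positive degree and that $\Gamma$ is regular.
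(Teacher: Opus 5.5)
Your proof is correct and follows the paper's argument: Jordan's theorem in the vertex-transitive case, and otherwise exactly two vertex orbits of equal size (bipartiteness plus double counting of edges), so that \Cref{cor:main} applies because $\gcd(m,m-1)=1$. You also fill in details the paper leaves implicit, namely why edge-transitivity together with positive degree gives at most two orbits, and why $2m>2$.
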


\begin{proof}
    If $\Gamma$ is vertex-transitive then the result follows from Jordan's theorem.
    Otherwise, since $\Gamma$ is regular and edge-transitive, $\Aut(\Gamma)$ has exactly two orbits on the vertices, and $\Gamma$ is bipartite.
    Moreover, regularity implies that these two orbits have the same size. The result therefore follows from \Cref{cor:main}.
\end{proof}

The coprimality conditions in \Cref{cor:main} are sufficient
but not necessary for the existence of a derangement in a two-orbit
permutation group.
For instance, every two-orbit permutation group of orbit type $(3, 55)$ contains a derangement, although this
pair does not satisfy the hypotheses of \Cref{cor:main}. This suggests the following natural question.

\begin{question}
    Which pairs $(n_1,n_2)$ of integers have the property that every two-orbit permutation group of orbit type $(n_1, n_2)$ contains a derangement?
\end{question}

The proof of \Cref{thm:main} is easy to outline. A key advantage of the formulation of \Cref{thm:main}, compared to the Ellis--Harper conjecture, is that it reduces easily to the case of simple groups.
This reduction is given in \Cref{sec:reduction}.
In the case of simple groups, we use the classification of normal $2$-coverings of finite simple groups due to Bubboloni--Spiga--Weigel \BSW{}.
There are several infinite families of classical and exceptional groups, mostly with a similar structure,
as well as a handful of cases that we check with a computer.

Fein--Kantor--Schacher called it ``unfortunate and perhaps even outrageous'' that their proof depends on detailed knowledge of the structure of the finite simple groups.
Forty years on, still no proof avoiding the classification is known.
Our result is a new contribution to this unfortunate, and indeed perhaps even outrageous, state of affairs.

\begin{acknowledgments}
The authors are grateful to Scott Harper, Luca Sabatini, Pablo Spiga and Adam Thomas for useful conversations.
\end{acknowledgments}

\section{Reduction to simple groups}
\label{sec:reduction}

In this section, we reduce the proof of \cref{thm:main} to the case of simple groups.
We begin by recalling a well-known generalization of Burnside's orbit-counting lemma (cf.~\cite{EH}*{Lemma~2.3}).

\begin{lemma}
    \label{lemma:burnside}
    Let $H \le \Sym(\Omega)$ be transitive and let $g \in \Sym(\Omega)$. Then the average number of fixed points of elements in the coset $Hg$ is 1.
\end{lemma}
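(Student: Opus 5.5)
The plan is to count pairs $(h,\omega)$ with $h\in H$, $\omega\in\Omega$ and $\omega^{hg}=\omega$; this is the double count behind Burnside's lemma. Let $|\Omega| = n$. The sum $\sum_{h \in H} \fix(hg)$ equals the number of such pairs, so it suffices to show it equals $|H|$. Dividing by $|Hg| = |H|$ then gives average $1$.

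Fix $\omega \in \Omega$ and count the $h \in H$ with $\omega^{hg} = \omega$, that is, with $\omega^h = \omega^{g^{-1}}$. Since $H$ is transitive, there is some $h_0 \in H$ with $\omega^{h_0} = \omega^{g^{-1}}$. The set of all such $h$ is then exactly the coset $H_\omega h_0$, which has $|H_\omega| = |H|/n$ elements by the orbit--stabilizer theorem.

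Summing over the $n$ points $\omega$ gives $\sum_{h\in H}\fix(hg) = n\cdot |H|/n = |H|$, which is the claim. The only step needing care is the identification of the solution set as a single coset of $H_\omega$. This uses transitivity of $H$ for existence and the stabilizer for uniqueness up to left multiplication. Nothing else is needed, and $g$ need not normalize $H$.
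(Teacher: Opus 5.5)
Your proposal is correct and follows essentially the same argument as the paper. Both double count pairs $(h,\omega)$ with $\omega^{hg}=\omega$ and, for each $\omega$, count the $h$ with $\omega^h=\omega^{g^{-1}}$. You also spell out the step the paper leaves implicit: this set is a coset $H_\omega h_0$, so it has size $|H_\omega|=|H|/|\Omega|$.
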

\begin{proof}
    Since $H$ is transitive,
    \begin{align*}
        \sum_{h \in H}\fix(hg)
        &= |\{(h,\omega) \in H \times \Omega \mid \omega^{hg} = \omega\}|\\
        &= \sum_{\omega \in \Omega} |\{h \in H \mid \omega^h = \omega^{g^{-1}} \}|
        = \sum_{\omega \in \Omega} |H_{\omega}| = |H|.\qedhere
    \end{align*}
\end{proof}

\begin{proposition}
    \label{prop:reduction-simple}
    A minimal counterexample to \Cref{thm:main} must be simple.
\end{proposition}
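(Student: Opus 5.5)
The plan is to take a minimal counterexample $G \le \Sym(\Omega)$, minimal with respect to $|G|$, with orbits $\Omega_1,\Omega_2$ and $|\Omega|>2$. Then $G$ has no derangement and no element of prime-power order with a unique fixed point. Suppose $G$ has a normal subgroup $N$ with $1 < N < G$; we derive a contradiction. The $N$-orbits on $\Omega$ form a $G$-invariant partition $\mathcal{B}$ of $\Omega$ into blocks, and each block lies inside $\Omega_1$ or $\Omega_2$. Write $\bar G$ for the image of $G$ in $\Sym(\mathcal{B})$. Then $\bar G$ is a two-orbit group, with orbits the blocks contained in $\Omega_1$ and those contained in $\Omega_2$. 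Since $N$ lies in the kernel of the action on $\mathcal{B}$, we have $|\bar G| \le |G/N| < |G|$. We split according to whether $|\mathcal{B}| = 2$ or $|\mathcal{B}| > 2$.

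\emph{Case $|\mathcal{B}| = 2$.} Here $N$ is transitive on both $\Omega_1$ and $\Omega_2$, so $N \le \Sym(\Omega)$ is itself a two-orbit group on the same set with $|\Omega|>2$. Every element of $N$ is an element of $G$, so $N$ contains neither a derangement nor a prime-power element with a unique fixed point. Since $|N|<|G|$, this contradicts minimality.

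\emph{Case $|\mathcal{B}| > 2$.} We first observe that $\bar G$ has no derangement. Indeed, if $g \in G$ fixes a point $\omega$, then $g$ fixes the block containing $\omega$; so a derangement of $\bar G$ on $\mathcal{B}$ would lift to a derangement of $G$ on $\Omega$. By minimality applied to $\bar G$, there is an element of $\bar G$ of order $p^a$ that fixes exactly one block $B$. Lift it to some $g_0 \in G$ and replace $g_0$ by its $p$-part $g$. This is a $p$-element of $G$, and it still induces a $p$-power of the original element. That power fixes $B$ and, because a power of a permutation of $p$-power order fixes exactly the same points (here blocks) as long as it is nontrivial, it fixes no other block. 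We must check that the induced permutation on $\mathcal{B}$ is not the identity. This is a small point that needs care; it follows by choosing the lift so that its image is exactly the given element, which is possible since the image of a $p$-part is the $p$-part of the image and the given element is already a $p$-element.

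Every element of the coset $Ng$ induces the same permutation of $\mathcal{B}$ as $g$. Hence every element of $Ng$ fixes $B$ setwise, and all its fixed points on $\Omega$ lie in $B$. Now apply \Cref{lemma:burnside} with $H = N|_B$, which is transitive on $B$, and with $g|_B$. The average number of fixed points on $B$, and hence on $\Omega$, of the elements of $Ng$ is $1$. Since $G$ has no derangements, every element of $Ng$ fixes at least one point. Therefore every element of $Ng$ fixes exactly one point. In particular the $p$-element $g$ has a unique fixed point, which is a contradiction.

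Hence $G$ has no nontrivial proper normal subgroup, that is, $G$ is simple. The step I expect to be the main obstacle is the coset-averaging argument in the case $|\mathcal{B}|>2$, together with the bookkeeping in the lift. One must make sure the lifted element is genuinely a prime-power element whose unique fixed block is preserved, and that \Cref{lemma:burnside} is applied to the restriction to $B$ rather than to all of $\Omega$.
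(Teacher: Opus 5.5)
Your proof is correct and follows the paper's argument essentially verbatim. Minimality forces $N$ to have at least three orbits; the image of $G$ on the $N$-orbits then has no derangement, so it has a $p$-element with a unique fixed block; and averaging $\fix$ over the coset $Ng$ via \Cref{lemma:burnside} forces the lifted $p$-element to have a unique fixed point (one aside of yours is false, e.g.\ $(1\,2)(3\,4\,5\,6)$ and its square fix different points, but it is unnecessary since, as you then note, the $p$-part of any lift maps exactly onto the given $p$-element).
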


\begin{proof}
Suppose $G \le \Sym(\Omega)$ is a counterexample to \Cref{thm:main} of minimal order, i.e.,
\begin{enumerate}
    \item $G \le \Sym(\Omega)$ has $2$ orbits and $|\Omega| > 2$,
    \item $\fix(g) \ge 1$ for all $g \in G$,
    \item $\fix(g) \ge 2$ for all $g \in G$ of prime-power order,
\end{enumerate}
and $|G|$ is minimal with respect to these conditions.
We claim that $G$ must be simple.

If not, $G$ has a proper nontrivial normal subgroup $N \lhd G$.
By minimality of $G$, $N$ must have at least $3$ orbits on $\Omega$.
Let $\Delta = \Omega / N$ be the partition of $\Omega$ into $N$-orbits.
Then $G/N$ acts on $\Delta$ with two orbits and $|\Delta| > 2$.
Let $G^*$ be the image of $G / N$ in $\Sym(\Delta)$.
Then $|G^*| \le |G / N| < |G|$, so, by minimality of $G$, either (2) or (3) fails for $G^*$.

If (2) fails then there is an element $g \in G$ that has no fixed $N$-orbit, which is impossible since $\fix(g) \ge 1$ for all $g \in G$.

If (3) fails then for some prime $p$ there is a $p$-element $g^* \in G^* \le \Sym(\Delta)$ such that $\fix(g^*) = 1$,
and we may lift $g^*$ to a $p$-element $g \in G$.
Let $\Sigma \in \Delta$ be the unique fixed point of $g^*$.
Then, for every $h \in Ng$, $h$ fixes $\Sigma$ and no other $N$-orbit,
so in particular $\fix(h) = \fix(h|_\Sigma)$ where $h|_\Sigma$ denotes the restriction of $h$ to $\Sigma$.
However, since $N$ is transitive on $\Sigma$, by \Cref{lemma:burnside} we have
\[
    \frac1{|N|} \sum_{h \in Ng} \fix(h|_\Sigma) = 1.
\]
Since $\fix(g) \ge 1$ for all $g \in G$, it follows that every $h \in Ng$ has a unique fixed point, but this is impossible because $\fix(g) \ge 2$.
\end{proof}

\section{Simple groups}
By \Cref{prop:reduction-simple}, it remains to prove \Cref{thm:main} under the assumption that $G$ is simple.
Thus let $G \le \Sym(\Omega)$ be a simple two-orbit permutation group with $|\Omega| > 2$, and suppose that $G$ contains no derangements.
If $G \cong C_p$ then every nontrivial element of $G$ is a $p$-element with at most one fixed point,
so we may assume $G$ is a nonabelian simple group.
If one of the orbits is a point then we refer to the theorem of Fein--Kantor--Schacher,
so we may assume both orbits are nontrivial.
Let $H, K < G$ be the point stabilizers.
Then $\{H, K\}$ is a normal $2$-covering of $G$, i.e.,
\[
    G = \bigcup_{g \in G} H^g \cup \bigcup_{g \in G} K^g,
\]
and thus we may refer to the classification of normal $2$-coverings of nonabelian simple groups due to Bubboloni--Spiga--Weigel: see~\BSW{Theorem~11.1}.
We need to inspect \BSW{Tables~11.1--8}, and in each case we need to check that some element of prime-power order is contained in exactly one conjugate of $H$ and no conjugate of $K$, or vice versa.

\subsection{Special cases}
\label{smallcases}

The smallest cases can be discarded on the simple grounds that in fact every element in the group has prime-power order, together with the elementary observation that some element $g \in G$ has at most one fixed point.
Indeed, by \Cref{lemma:burnside} applied to each of the two orbits, we have
\[
    \frac1{|G|}\sum_{g \in G} \fix(g) = 2,
\]
and $\fix(1) = n > 2$, so $\fix(g) < 2$ for some $g \in G$.
We can thus discard the cases
\[
A_5,\ A_6,\ \PSL_2(7),\ \PSL_2(8),\ \PSL_3(4).
\]

We use \texttt{GAP} to deal with several further sporadic and small-rank cases, namely the groups
\begin{center}
     $A_7,\ A_8,\ M_{11},\ {}^2F_4(2)',
     \ \PSU_3(3),\ \PSU_3(5),$
    $\PSU_4(2)\cong \PSp_4(3),\ \PSU_4(3).$ 
\end{center}
For each of these groups, we inspect the primitive actions arising from the normal $2$-coverings listed in \BSW{Tables~11.2--11.8}.
For each pair of primitive actions, and for each prime-power order $r$ arising among elements of $G$, we record the maximum number of fixed points among elements of order $r$ in both actions.
In every case there is some prime power $r$ such that, for every element $g \in G$ of order $r$, the total number of fixed points in the two actions is $1$.
In most but not all cases the fixed point can be taken to be in either orbit.
The output is shown in \Cref{tab:gap-small-cases}.

\begin{table}[t]
    \centering
    \footnotesize
    \caption{\footnotesize Some special cases checked with \texttt{GAP}. For each pair of primitive actions \((\Omega_1,\Omega_2)\) arising from the normal \(2\)-coverings in \mbox{\BSW{Tables~11.2--11.8}}, and for either index $i \in \{1, 2\}$, the table lists, if possible, a prime-power order \(r = r_i\) such that $\fix_{\Omega_1\cup\Omega_2}(g) = 1$ and $\fix_{\Omega_i}(g) = 0$ for every element $g \in G$ of order $r$.}
    \label{tab:gap-small-cases}
    \[
    \begin{array}{llllrr}
        \toprule
        G & H & K & \text{orbit type} & r_1 & r_2 \\
        \midrule
        A_7 & A_7 \cap (S_2 \times S_5) & \SL_3(2) & (21, 15) & 7 & 5 \\
        A_8 & A_8 \cap (S_3 \times S_5)&2^3:\SL_3(2)& (56, 15) & 7 & 5 \\
        M_{11} & M_{10} & \PSL_2(11) & (11, 12) & 11 & 2^3 \\
        & M_9:2 & \PSL_2(11) & (55, 12) & 11 & 2^3 \\
        & M_8:S_3 & \PSL_2(11) & (165, 12) & 11 & 2^2 \\
        {}^2F_4(2)' & 2\cdot [2^8]:5:4 & \PSL_3(3):2 & (1755,1600) & 13 & 2^4\\
        & 2 \cdot [2^8]:5:4 & \PSL_2(25) & (1755, 2304) & - & 2^4 \\
        \PSU_3(3) & \PSL_2(7) & 3^{1+2} :8 & (36,28) & - & 7 \\
        & \PSL_2(7) & \GU_2(3) & (36, 63) & 2^3 & 7 \\
        \PSU_3(5) & A_7 & \frac{1}{3} 5^{1+2}:24 & (50, 126) & - & 7 \\
        \PSU_4(2) & \GU_3(2) & 2^4 :\SL_2(4) & (40,27) & - & 3^2 \\
        & \GU_3(2) & \Sp_4(2) & (40,36) & 5 & 3^2 \\
        \PSU_4(3) & \frac{1}{4}\GU_3(3) & \frac{1}{4} 3^4:\SL_2(9):2 & (540,112) & 3^2 & 7 \\
        & \PSL_3(4) & \frac{1}{4} 3^{1+4}:\SU_2(3):8 & (162, 280) & 3^2 & 7 \\
        &A_7 & \frac{1}{4} 3^{1+4}:\SU_2(3):8 & (1296,280) & 3^2 & 5 \\
        \bottomrule
    \end{array}
    \]
\end{table}

In some of the cases appearing in \BSW{Tables~11.1--11.8}, one of the two actions is imprimitive, and thus not directly covered by \Cref{tab:gap-small-cases}.
These cases can be checked indirectly from \Cref{tab:gap-small-cases} as follows.
Suppose the orbits are $(\Delta_1, \Omega_2)$ where $\Delta_1$ is imprimitive and $\Omega_2$ is primitive.
Let $\Omega_1$ be a minimal primitivity system in $\Delta_1$.
Then $(\Omega_1, \Omega_2)$ or $(\Omega_2, \Omega_1)$ appears in \Cref{tab:gap-small-cases}.
In each such case, there happens to be an element $g \in G$ of prime-power order with no fixed points in $\Omega_1$ and a unique fixed point in $\Omega_2$.
It follows that $g$ has the same properties with respect to $(\Delta_1, \Omega_2)$.
The only case in which both actions may be imprimitive is $G = \PSL_3(4)$ (as noted in \BSW{Corollary~11.2}), which has already been excluded by the preceding argument.

\subsection{Classical groups}
\label{sec:classical}

Next, we consider the infinite families of classical groups appearing in \BSW{Tables~11.4--11.8}. For convenience, we replace each simple group by its natural quasisimple cover acting faithfully on the corresponding linear, unitary, or symplectic space $V$.

Recall that a \emph{primitive prime divisor} of $q^n - 1$ is a prime $r$ such that $r \mid q^n - 1$ but $r \nmid q^i - 1$ for $0 < i < n$.
By a celebrated theorem of Zsigmondy~\cite{Zsigmondy}, such a prime exists unless $n = 2$ and $q$ is a Mersenne prime or $(n, q) = (6, 2)$.

\subsubsection{Linear groups}

Refer to \BSW{Tables~11.4--6}.

\emph{Case $G = \SL_2(q)$.}
Here $H$ is either a nonsplit torus $T \cong C_{q+1}$ or its normalizer $N_G(T)$, while $K$ is contained in either a Borel subgroup or the normalizer of a split torus, so $|K|$ divides $2q(q-1)$.
If $q$ is not a Mersenne prime, let $r$ be a primitive prime divisor of $q^2 - 1$;
otherwise let $r = q + 1$.
Let $s \in T$ be an element of order $r$.
Then $T = C_G(s)$, so $s$ is contained in a unique conjugate of $T$ and $N_G(T)$, and $s$ is contained in no conjugate of $K$, as $r$ does not divide $2q(q-1)$. Alternatively, if $q$ is a Mersenne prime, one may instead use a unipotent element of order $q$.

\emph{Case $G = \SL_3(q)$.}
The case $\SL_3(4)$ was excluded in the previous section,
and in all other cases $H$ is either a torus $T$ of order $q^2+q+1$ or its normalizer, while $K$ is either a parabolic subgroup $q^2 : \GL_2(q)$ or a Levi complement $\GL_2(q)$.
Let $r$ be a primitive prime divisor of $q^3 - 1$ and let $s \in T$ be an element of order $r$.
The argument of the previous case applies again, as $r$ does not divide $|q^2:\GL_2(q)| = q^3(q^2 -1)(q-1)$.

\emph{Case $G = \SL_4(q)$.}
Here $H \le \SL_2(q^2) \cdot (q+1) \cdot 2$ while $K \cong q^3 : \GL_3(q)$ is the stabilizer of a one- or three-dimensional subspace.
Let $r$ be a primitive prime divisor of $q^3 - 1$, and let $s \in G$ be an element of order $r$.
Then $s$ is reducible of type $3 + 1$, so $s$ is contained in a unique conjugate of $K$,
and $s$ is contained in no conjugate of $H$, as $r$ does not divide $|\SL_2(q^2) \cdot (q+1) \cdot 2| = 2 q^2 (q^4 - 1) (q+1)$.

\subsubsection{Unitary groups}

Refer to \BSW{Table~11.7}. The module is $V = \F_{q^2}^n$.

\emph{Case $G = \SU_3(q)$, $q = 3^f > 3$.}
Here $H$ is the normalizer of a torus $T \cong C_{q^2 - q + 1}$, and $K \cong \GU_2(q)$ is the stabilizer of a nondegenerate line.
It suffices to let $r$ be a primitive prime divisor of $q^6 - 1$, and let $s \in T$ be an element of order $r$.
Then $T=C_G(s)$, so $s$ is contained in a unique conjugate of $H=N_G(T)$, and $s$ is contained in no conjugate of $K$ as $r$ does not divide $|K| = q (q^2 - 1) (q + 1)$.

\emph{Case $G = \SU_4(q)$, $q > 3$.}
Here $H \cong \GU_3(q)$ is the stabilizer of a decomposition $V = U \oplus U^\perp$ with $\dim U = 3$, and $K \cong q^4 : \SL_2(q^2)  : (q-1)$.
Let $r$ be a primitive prime divisor of $q^6 - 1$, and let $s \in G$ be an element of order $r$.
Then $s$ is reducible of type $3 + 1$, so $s$ is contained in a unique conjugate of $H$,
and $s$ is contained in no conjugate of $K$, as $r$ does not divide $|K| = q^6 (q^4 - 1)(q-1)$.

\subsubsection{Symplectic groups}

Refer to \BSW{Table~11.8}.

\emph{Case $G = \Sp_6(q)$, $q=3^f$.}
Here $H \cong \Sp_2(q) \times \Sp_4(q)$ is the stabilizer of a decomposition $V = U \oplus U^\perp$ with $\dim U = 2$, and $K \cong \Sp_2(q^3) : 3$.
There is a maximal torus $T \le H$ such that $T \cong C_{q+1} \times C_{q^2 + 1}$.
Let $r$ be a primitive prime divisor of $q^4 - 1$, and let $s \in T$ be an element of order $r$.
Then $s$ is reducible of type $1+1+4$, so $s$ is contained in a unique conjugate of $H$,
and $s$ is not contained in any conjugate of $K$, as $r$ does not divide $|K| = 3 q^3 (q^6 - 1)$.

\emph{Case $G = \Sp_{2n}(q)$, $q=2^f$, $n \ge 2$.}
Here $H \cong \O_{2n}^-(q)$ and $K \cong \O_{2n}^+(q)$.
We may take $H$ to be the group of isometries of a fixed quadratic form $f : V \to \F_q$ of minus type whose polar form is the $G$-invariant sympletic form.
There is a maximal torus $T \le H$ such that $T \cong C_{q^n + 1}$.
Deferring the case $(n, q) = (3, 2)$, let $r$ be a primitive prime divisor of $q^{2n} - 1$ and let $s \in T$ be an element of order $r$.
Then $s$ is irreducible.
We claim this implies that $s$ is contained in a unique conjugate of $H$.
Indeed, it suffices to show that there is a unique $s$-invariant quadratic form $f : V \to \F_q$ whose polar form is the given symplectic form.
Let $f_1, f_2$ be two such forms and let $\delta = f_1 - f_2$.
Then $\delta$ is an $s$-invariant quadratic form with zero polar form, which implies that $\delta(v) = \ell(v)^2$ for some $s$-invariant linear form $\ell : V \to \F_q$.
Let $W = \{v \in V: \ell(v) = 0\}$.
Then $W$ is an $s$-invariant subspace of codimension at most $1$.
Since $s$ is irreducible, it follows that $W = V$ and $f_1 = f_2$, proving the claim.
On the other hand, $s$ is contained in no conjugate of $K$ as $r$ does not divide $|K| = 2 q^{n(n-1)}(q^n-1) \prod_{i=1}^{n-1} (q^{2i} - 1)$.

This leaves the case $(n, q) = (3, 2)$, as there is no primitive prime divisor of $2^6 - 1$.
In this case $T \cong C_9$, so let $s \in T$ be an element of order $9$.
The same argument shows that $s$ is contained in a unique conjugate of $H$,
and $s$ is contained in no conjugate of $K$ because $K = \O_6^+(2) \cong S_8$ has no element of order $9$.
Alternatively, in this case, one may instead use an element of order $7$.

\subsection{Exceptional groups}
\label{sec:exceptional}

There are two infinite families in \BSW{Table~11.3} of exceptional groups, specifically of types $G_2$ and $F_4$.
In these cases we argue via the Weyl groups of the ambient algebraic groups.
We remark that the classical cases could also have been analyzed in this way.

We briefly recall some background, largely following Malle--Testerman~\MT{}.
Let \(k\) be the algebraic closure of a finite field of characteristic \(p\).
Let \(G\) be a simple linear algebraic group over \(k\) with Steinberg endomorphism \(F:G\to G\).
The corresponding finite group of Lie type is the subgroup \(G^F \le G\) of $F$-fixed points. Let \(T\) be an \(F\)-stable maximal torus of \(G\).
The corresponding \emph{Weyl group} and \emph{character group} are
\[
    W = W_G(T) = N_G(T)/T,\qquad
    X = X(T) = \Hom(T,\G_m).
\]
Since \(T\) is \(F\)-stable, \(F\) induces an automorphism of \(W\), and by pullback it also induces a compatible endomorphism of \(X\). On \(X_\R=X\otimes_\Z\R\), this endomorphism has the form \(q\phi\), where \(q\) is a positive fractional power of \(p\) and \(\phi\) is a finite-order automorphism preserving the root system $\Phi \subset X_\R$. The scalar \(q\) has no effect on the induced action on \(W\), so we may use the same symbol \(\phi\) for the automorphism of \(W\) induced by \(F\); see \MT{22.1--22.2}.
Except for the very twisted cases \({}^2B_2,{}^2G_2,{}^2F_4\), \(q\) is an integral power of \(p\) and the same parameter appearing in the notation for the finite group of Lie type.

We shall make use of the fact that $\phi \in \Aut(X_\R)$ mildly depends on the choice of $T$.
By \MT{25.1, 21.9}, there is a bijection
\[
    \{G^F\text{-classes of \(F\)-stable maximal tori in}~G\}
    \longleftrightarrow \{W\text{-classes in}~W\phi\}.
\]
Here \(W\phi\) denotes the coset of \(\phi\) in the semidirect product \(W\rtimes\langle\phi\rangle\), where \(\phi\) acts on \(W\) by \(w^\phi=\phi(w)\).
If a different $F$-stable maximal torus $T^g$ is chosen instead of $T$ then the character groups $X(T)$ and $X(T^g)$ are related by conjugation by $g$,
and the new $\phi$ is given by the corresponding element of \(W\phi\).
Thus we have the liberty to modify $\phi \in \Aut(X_\R)$ by any element of $W$ \MT{p.~195}.

Refer to \BSW{Table~11.3}.

\subsubsection{Type $G_2$}

Here the relevant finite group is $G_2(q)$, $q = 2^f \ge 4$, with components of the form $\SL_3(q) \cdot 2$ and $\SU_3(q) \cdot 2$.
Write $G_2(q) = G^F$ where $G = G_2(k)$, $k$ is the algebraic closure of $\F_2$, and $F = F_q$ is the standard Frobenius endomorphism.
Then the components have the form $H^F$ and $K^F$, where $H$ and $K$ are both normalizers of suitable $F$-stable subsystem subgroups of type $A_2$.
The Weyl group of $G$ is $W_G \cong D_{12}$, while $W_{H^\circ} \cong D_6$, so $|H:H^\circ| = 2$ and $W_H = W_G$ by \MT{13.8}; this implies $N_G(T) \le H$ for every maximal torus $T \le H$.
There is an $F$-stable maximal torus $T \le H$ such that $\phi|_{X(T)}$ has order $3$ and $T^F \cong C_{q^2+q+1}$.
Let $r$ be a primitive prime divisor of $q^3 - 1 = (q-1) (q^2 + q + 1)$ and let $s \in T^F$ be an element of order $r$.
We claim that $s$ is contained in a unique $G^F$-conjugate of $H^F$.
Indeed, suppose $s \in (H^F)^g$, where $g \in G^F$.
Then $s^{g^{-1}} \in H^\circ$ since $|H:H^\circ| = 2$ and $r \ne 2$.
Since $s$ is regular semisimple it follows that $T^{g^{-1}} \le H^\circ$.
By conjugacy of maximal tori in $H$ it follows that $T^{g^{-1}} = T^h$ for some $h \in H$, so $g \in H N_G(T) \le H$.
Hence $g \in H^F$ and $(H^F)^g = H^F$.
Moreover, $s$ is contained in no conjugate of $K^F$ as $r$ does not divide $|K^F| = |\SU_3(q) \cdot 2| = 2 q^3 (q^3 + 1) (q^2 - 1)$.

\subsubsection{Type $F_4$}

Here the relevant finite group is $F_4(q)$, $q = 3^f$, with components of the form ${}^3D_4(q) \cdot 3$ and $\Spin_9(q)$.
Write $F_4(q) = G^F$ where $G = F_4(k)$, $k$ is the algebraic closure of $\F_3$, and $F = F_q$ is the standard Frobenius endomorphism.
Then the components have the form $H^F$ and $K^F$, where $H$ and $K$ are normalizers of suitable $F$-stable subsystem subgroups of type $D_4$ and $B_4$, respectively.
The Weyl group of $G$ is $W_G = W_{F_4} = W_{D_4} : S_3$.
This implies that $H = H^\circ \cdot S_3$ and $W_H = W_G$ by \MT{13.8}, and hence $N_G(T) \le H$ for every maximal torus $T \le H$.
There is an $F$-stable maximal torus $T \le H$ such that $\phi|_{X(T)}$ has order $12$ (i.e., corresponds to a Coxeter element of $W_{F_4}$) and $T^F \cong C_{q^4 - q^2 + 1}$.
Let $r$ be a primitive prime divisor of $q^{12} - 1$, so $r \mid q^4 - q^2 + 1$, and let $s \in T^F$ be an element of order $r$.
Arguing exactly as in the previous case, $s$ is contained in a unique $G^F$-conjugate of $H^F$, and $s$ is contained in no conjugate of $K^F$ as $r$ does not divide $|K^F| = |\Spin_9(q)| = q^{16} (q^8 - 1) (q^6 - 1) (q^4 - 1) (q^2 - 1)$.

This concludes the proof of \Cref{thm:main}.

 As a final remark, the proof shows that, in almost all cases, if $G$ is a simple two-orbit permutation group with no derangements, then $G$ contains an element of \textit{prime} order fixing exactly one point. Indeed, when both orbits are nontrivial, this holds for all the infinite families considered above. Among the small cases checked in \cref{smallcases} and \cref{tab:gap-small-cases}, one can check with \texttt{GAP} that the only exceptions are
$G={}^2F_4(2)'$ with point stablizers $H \cong 2 \cdot [2^8]:5:4$ and $K \cong \PSL_2(25)$, and $G=\PSU_4(2)$ with point stabilizers $H\cong\GU_3(2)$ and  $K \cong 2^4:\SL_2(4)$.
If one orbit is trivial, the only exception is represented by $M_{11}$ with point stabilizer $\PSL_2(11)$, as proved by Giudici~\cite{Giudici}.

\bibliography{refs}
\end{document}